\documentclass[11pt]{amsart}
\usepackage[T1]{fontenc}
\usepackage{lmodern}
\usepackage[margin=1in]{geometry}
\usepackage{amssymb,mathtools,microtype,xcolor}
\usepackage[colorlinks=true,linkcolor=blue!45!black,citecolor=blue!45!black,urlcolor=blue!45!black]{hyperref}
\hypersetup{pdftitle={Distinguished standard modules in the Archimedean case (Gal1)},pdfauthor={Alan Xuelun Hou and Tudor Popescu}}
\newcommand{\C}{\mathbb C}
\newcommand{\R}{\mathbb R}
\newcommand{\HH}{\mathbb H}
\DeclareMathOperator{\GL}{GL}
\DeclareMathOperator{\Hom}{Hom}
\DeclareMathOperator{\Ad}{Ad}
\DeclareMathOperator{\diag}{diag}
\DeclareMathOperator{\Lie}{Lie}
\newcommand{\IP}{I_P^G}
\newtheorem{theorem}{Theorem}
\newtheorem{lemma}[theorem]{Lemma}
\numberwithin{equation}{section}

\title[Distinguished standard modules in case (Gal1)]{Distinguished standard modules for $\GL_{2m}(\mathbb{C})/\GL_{m}(\mathbb{H})$}
\author{Alan Xuelun Hou}
\author{Tudor Popescu}
\keywords{Distinguished representations, standard modules, symmetric spaces, intertwining periods, quaternionic general linear groups}
\date{}
\begin{document}
\begin{abstract}
We characterize the standard modules of $\GL_{2m}(\C)$ that
are distinguished by $\GL_m(\HH)$. Let $\delta_1,\dots, \delta_{2m}$ be characters of $\mathbb{C}^\times$. Assume that $S = \delta_1 \times \cdots \times \delta_{2m}$ is a standard module of $\GL_{2m}(\mathbb{C})$. For each $i$, define ${\delta_i^*} (z) = \delta_i(\overline{z})^{-1}$ for $ z \in \mathbb{C}^\times$. In particular, we conclude that a standard module for $\GL_{2m}(\mathbb{C})$ is distinguished by $\GL_{m}(\mathbb{H})$ if and only if there
exists an involution $p\in S_{2m}$ without fixed points such that
$\delta_{p(i)}=\delta_i^*$ for every $i$.
We first verify the hypotheses of the multiplicity estimate theorem of Suzuki
and Tamori in \cite{ST}.
The orbit calculation of Matringe, Offen, and Yang in \cite{MOYglobal} then gives the necessary condition and a dimension bound.
Then local intertwining periods prove sufficiency.
\end{abstract}
\maketitle

\section{Introduction}

We use the notation of Sections 2 and 4 in \cite{MOYglobal} for
the Archimedean case (denoted as Gal1 in their work).
Let $m\ge1$ be an integer.
Set $G=\GL_{2m}(\C)$ and $H=\GL_m(\HH)$.
Write $\HH=\C\oplus\C\mathbf j$, where $\mathbf j^2=-1$ and
$\mathbf jz=\bar z\mathbf j$ for $z\in\C$.
Define
\[
 \phi(a+b\mathbf j)=
 \begin{pmatrix}a&b\\-\bar b&\bar a\end{pmatrix},
 \qquad a,b\in\C.
\]
The multiplication rules in $\HH$ show that $\phi$ is a
homomorphism of real algebras.
It is injective because its first row determines $a$ and $b$.
Applying $\phi$ to each matrix entry gives an embedding
$\Phi:H\hookrightarrow G$. For a square matrix $B$, write $[B]_m=\diag(B,\ldots,B)$ with $m$ diagonal blocks.
Moreover, we set $$J_0=\begin{pmatrix}0&1\\-1&0\end{pmatrix} \text{ and } J=[J_0]_m.$$
Defining $\theta(g)=J\bar gJ^{-1}$ for $g\in G$, we can easily see that
a matrix $B\in\operatorname{Mat}_2(\C)$ satisfies
$B=J_0\bar BJ_0^{-1}$ exactly when $B=\phi(q)$ for some
$q\in\HH$. Applying this calculation to every block gives $\Phi(H)=G^\theta$.
Therefore, we can identify $H$ with its image under $\Phi$.
We give $G$ and $H$ their natural structures as reductive Nash
groups in the sense of \cite{Sun2015}.

Representations of reductive groups are smooth admissible
Fr\'echet representations of moderate growth, as in
Chapter 11 of \cite{WallachII}.
This is the convention used in Section 2.2 of \cite{MOYglobal}.
All invariant linear forms are required to be continuous.
A representation $V$ of $G$ is distinguished by $H$ if
$\Hom_H(V,\C)\ne0$.

Let $P=MU$ be the upper triangular minimal parabolic of $G$, where $M$ is the diagonal subgroup and $U$ is the upper
triangular unipotent subgroup.
For a closed subgroup $Q$ of $G$, let $\delta_Q$ be its modulus
character.

For $z\in\C^\times$, put $\nu(z) := |z|_\C=z\bar z$.
For a character $\delta$ of $\C^\times$ and $s\in\C$, set
$\delta[s]=\delta\nu^s$. Moreover, let $r(\delta)$ be the unique real number for which
$\delta[-r(\delta)]$ is unitary, and let $\iota(z) := \bar z$.
Then we can define $\delta^*=(\delta^\iota)^\vee$, and note that
$\delta^*(z)=\delta(\bar z)^{-1}$. Also $\delta^\iota=\delta\circ\iota$ and
$\delta^\vee=\delta^{-1}$.
For $k\in\mathbb Z$, define $\omega_k(z)=(z/\sqrt{z\bar z})^k$.

Let $\delta_1,\ldots,\delta_{2m}$ be characters of $\C^\times$
satisfying $r(\delta_1)\ge\cdots\ge r(\delta_{2m})$.
For each $i$, let $k_i$ and $\lambda_i$ be the unique parameters
such that
\begin{equation}\label{eq:characters}
 \delta_i=\omega_{k_i}[\lambda_i],\qquad
 k_i\in\mathbb Z,\quad\lambda_i\in\C.
\end{equation}
The definition of $r(\delta_i)$ gives $r(\delta_i)=\Re\lambda_i$.
Set $\sigma=\delta_1\otimes\cdots\otimes\delta_{2m}$.
We realize the normalized induction $\IP(\sigma)$ on the space
of smooth functions $f:G\to\C$ satisfying
\[
 f(utg)=\delta_P(t)^{1/2}\sigma(t)f(g),
 \qquad u\in U,\ t\in M,\ g\in G.
\]
The group $G$ acts by right translation: $(g_0\cdot f)(g)=f(gg_0)$.
The corresponding standard module is
\begin{equation}\label{eq:standard}
 S=\delta_1\times\cdots\times\delta_{2m}=\IP(\sigma).
\end{equation}
Every standard module of $G$ has this form.
Indeed, $\GL_k(\C)$ has no representations that are essentially
square integrable modulo its center when $k>1$; see
Section 2.6 of \cite{MOYglobal}.
Let $S_{2m}$ be the permutation group of $\{1,\ldots,2m\}$.

\begin{theorem}\label{thm:main}
The module $S$ is distinguished by $H$ if and only if there
exists an involution $p\in S_{2m}$ without fixed points such that
$\delta_{p(i)}=\delta_i^*$ for every $i$.
Moreover,
\begin{equation}\label{eq:bound}
 \dim\Hom_H(S,\C)
 \le\#\{p\in S_{2m}:p^2=1,\ p(i)\ne i,\
                  \delta_{p(i)}=\delta_i^*\ \text{for every }i\}.
\end{equation}
\end{theorem}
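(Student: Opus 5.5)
The plan has three steps: a geometric-lemma bound, an orbit computation that gives necessity and the dimension bound, and intertwining periods that give sufficiency.

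\textbf{Geometric lemma and multiplicity estimate.} I would analyse $\Hom_H(\IP(\sigma),\C)$ via the double coset space $P\backslash G/H$. Following Section 4 of \cite{MOYglobal}, the $P$--$H$ double cosets are parametrized by twisted involutions. Since $P$ is minimal and $M$ is a torus, the relevant orbits correspond to involutions $p\in S_{2m}$ with $x\theta(x)^{-1}$ representing $p$. The quaternionic condition, that $\theta$ has no real points of $\GL_1(\C)$ type for $\HH$, forces $p$ to have no fixed points. Concretely, a fixed point would require a $\theta$-stable $\GL_1(\C)$ factor, on which $\theta$ would be $z\mapsto\bar z$ twisted by $J_0$; that is impossible since $H$ contains no copy of $\GL_1(\R)$ in such a position, and the orbit calculation of \cite{MOYglobal} excludes it. I would then verify the hypotheses of the multiplicity estimate of Suzuki--Tamori \cite{ST}: the symmetric pair is a reductive Nash pair, there are finitely many orbits, and Schwartz homology and distribution-theoretic Bruhat filtration control the orbits. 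Their theorem gives
\[
\dim\Hom_H(\IP(\sigma),\C)\le\sum_{x}\dim\Hom_{M_x}\bigl(\sigma\otimes\delta_x,\C\bigr),
\]
where $x$ ranges over orbit representatives, $M_x=M\cap xHx^{-1}$, and $\delta_x$ is the appropriate modular-character ratio. The main obstacle is checking the hypotheses of \cite{ST}. In particular, the higher-derivative or transversal contributions must vanish, or at least not increase the bound. I would try to control this using the genericity built into the ordering $r(\delta_1)\ge\cdots\ge r(\delta_{2m})$ together with their vanishing criterion for the normal-derivative terms.

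\textbf{Orbit computation.} For a fixed-point-free involution $p$, the stabilizer $M_x$ is isomorphic to a product over the pairs $\{i,p(i)\}$ of copies of $\C^\times$ embedded as $z\mapsto(z,\bar z^{-1})$ up to a twist, or as $(z,\bar z)$. I would use \cite{MOYglobal} Section 4 to fix the precise form and to show that $\delta_x$ is trivial on $M_x$, since the modular characters are compatible for these orbits. Then $\Hom_{M_x}(\sigma,\C)\ne0$ if and only if $\delta_{p(i)}=\delta_i^*$ for every $i$, and in that case the space is one-dimensional. Summing over orbits gives \eqref{eq:bound}. Nonvanishing of $\Hom_H(S,\C)$ forces some term to be nonzero, which gives the necessity direction.

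\textbf{Sufficiency.} Given such a $p$, I would construct a nonzero $H$-invariant form by a local intertwining period
\[
J(f)=\int_{M_x\backslash xHx^{-1}\cap\ldots} f(x h)\,dh,
\]
or more precisely $J(f,\lambda)$ defined on the family $\sigma[\lambda]$ over the twisted space $\{\lambda:\lambda_{p(i)}=-\lambda_i\}$. I would argue in three steps:
\begin{enumerate}
\item Show that $J$ converges in a cone, by reduction to $\GL_2(\C)/\GL_1(\HH)$-type integrals and Iwasawa estimates.
\item Obtain meromorphic continuation, via the functional equations of \cite{MOYglobal}.
\item Take the leading Laurent coefficient at the given $\sigma$. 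This coefficient is nonzero, still $H$-invariant, and continuous.
\end{enumerate}
Nonvanishing is proved on functions supported in the open part of the orbit $PxH$, where the integral is manifestly nonzero.
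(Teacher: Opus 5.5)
Your proposal is correct in outline and follows the paper's route: you verify the hypotheses of \cite[Theorem~5.8]{ST}, read off necessity and \eqref{eq:bound} from the orbit computation, and use local intertwining periods for sufficiency. In the orbit step you should commit to $M_{x_s}=\{\diag(z_i):z_{p(i)}=\bar z_i\}$, i.e.\ the $(z,\bar z)$ embedding, since $(z,\bar z^{-1})$ would give the wrong condition $\delta_{p(i)}=\delta_i\circ\iota$. In the Suzuki--Tamori step, the paper's list of hypotheses is conditions (A)--(D) of \cite[Section~5.3]{ST}, and (A) requires the pair to be of type $(\mathfrak q_\C,\mathfrak q)$. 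The ordering of the $r(\delta_i)$ enters only as the non-strict chamber condition in (B), so no genericity is needed. Finally, the paper does not redo convergence, meromorphic continuation and the leading-coefficient argument. It writes $\delta_i=\omega_{k_i}[\lambda_i]$, deduces $k_{p(i)}=k_i$ and $\lambda_{p(i)}=-\lambda_i$, cites \cite[Theorem~5.4]{MOYlocal} with \cite[Lemma~4.2]{MOYglobal} to get a $G_{x_s}$-invariant form, and translates it by $\eta_s$.
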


Theorem~\ref{thm:main} proves the Archimedean case (Gal1) of Theorem 4.6 in \cite{MOYglobal}.
The main step is the verification of the hypotheses of Theorem 5.8 in \cite{ST}.

\section{Proof of the Main Theorem}

Define $X=\{x\in G:x=\theta(x)^{-1}\}$.
The action of $G$ on $X$ is $g\cdot x=gx\theta(g)^{-1}$.
Let $e$ be the identity element of $G$.
Since the stabilizer of $e$ is $H$, the map $gH\mapsto g\cdot e$
identifies $G/H$ with $G\cdot e$.
For $x\in X$, define $\theta_x=\Ad(x)\circ\theta$.
For a subgroup $Q$ of $G$, put $Q_x=Q\cap G^{\theta_x}$.

Let $\alpha=(1,\ldots,1)$ have $2m$ entries.
By \cite{MOYglobal}, the orbits of $P$ in $G\cdot e$
are indexed by
\[
 J(\alpha)
 =\left\{s=(a_{i,j})\in\operatorname{Mat}_{2m}(\mathbb Z_{\ge0}):
 s={}^ts,\ \sum_j a_{i,j}=1,\ a_{i,i}\in2\mathbb Z\right\}.
\]
The row sums and symmetry make each $s$ a permutation matrix.
Its symmetry implies that the corresponding permutation $p$
satisfies $p^2=1$.
For each $i$, the defining conditions give
\begin{equation}\label{eq:no-fixed-points}
 0\le a_{i,i}\le\sum_j a_{i,j}=1,
 \qquad a_{i,i}\in2\mathbb Z.
\end{equation}
Thus $a_{i,i}=0$.
Since $p(i)=i$ would give $a_{i,i}=1$, the permutation $p$ has
no fixed points.
Write $w_s=s$ for this permutation matrix viewed as an element
of $G$.
Set $$w_\star=\Bigg[\begin{pmatrix}0&1\\1&0\end{pmatrix}\Bigg]_m.$$
We have that the representatives in Section 4.1 of \cite{MOYglobal} satisfy
$x_s\in Mw_sw_\star\cap G\cdot e$.

Write $x_s=a_sw_sw_\star$ with $a_s\in M$.
For $t\in M$, we have $\theta(t)=w_\star\bar t\,w_\star^{-1}$.
Since $M$ is abelian, substitution gives
\[
 \theta_{x_s}(t)
 =a_sw_s\bar t\,w_s^{-1}a_s^{-1}
 =w_s\bar t\,w_s^{-1}.
\]
Consequently,
\begin{equation}\label{eq:torus}
 \begin{aligned}
 \theta_{x_s}(\diag(z_1,\ldots,z_{2m}))
   &=\diag(\bar z_{p(1)},\ldots,\bar z_{p(2m)}),\\
 M_{x_s}&=\{\diag(z_i):z_i\in\C^\times,\ z_{p(i)}=\bar z_i\}.
 \end{aligned}
\end{equation}
Define $\mathfrak s=\{\diag(t_i):t_i\in\R\}$.
Formula~\eqref{eq:torus} shows that $\theta_{x_s}$ preserves $M$.
Its differential preserves $\mathfrak s$.

\begin{lemma}\label{lem:ST}
For $\sigma$ in \eqref{eq:standard},
\begin{equation}\label{eq:estimate}
 \dim\Hom_H(\IP(\sigma),\C)
 \le\sum_{s\in J(\alpha)}\dim\Hom_{M_{x_s}}(\sigma,\C).
\end{equation}
\end{lemma}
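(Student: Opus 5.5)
The plan is to obtain \eqref{eq:estimate} as a direct specialization of the multiplicity estimate, Theorem 5.8 of \cite{ST}. That theorem bounds $\dim\Hom_H(\IP(\sigma),\C)$ by a sum over the $P$-orbits in $G\cdot e$ of the dimensions of suitably twisted $M_x$-invariant forms. So the work is to check its hypotheses for our data $(G,H,\theta,P,\sigma)$ and then to show that every twist appearing there is trivial.

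\textbf{Step 1 (orbit data).} By \cite{MOYglobal}, the $P$-orbits in $G\cdot e\cong G/H$ are finite in number and indexed by $J(\alpha)$, with representatives $x_s=a_sw_sw_\star$. I will pass this parametrization to \cite{ST}. The structural hypothesis needed there is that each representative can be chosen so that $\theta_{x_s}$ stabilizes $M$, and in fact $P\cap\theta_{x_s}(P)$ together with a Levi decomposition compatible with $\theta_{x_s}$. Formula \eqref{eq:torus} already gives $\theta_{x_s}(M)=M$ and shows that $d\theta_{x_s}$ preserves $\mathfrak s$, with the explicit action $z_i\mapsto\bar z_{p(i)}$. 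Since $\theta_{x_s}$ permutes coordinates via the fixed-point-free involution $p$, it carries positive roots to roots $e_{p(i)}-e_{p(j)}$. From this I will read off $P_{x_s}=M_{x_s}\ltimes U_{x_s}$, where $U_{x_s}=U\cap\theta_{x_s}(U)\cap G^{\theta_{x_s}}$.

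\textbf{Step 2 (the twisting characters).} In \cite{ST} each orbit contributes $\Hom_{M_{x}}(\sigma\otimes\chi_x,\C)$, where $\chi_x$ is built from $\delta_P^{1/2}$, the modulus of $P_x$, and possibly the modulus of $M_x$. I will compute $\chi_{x_s}$ on $M_{x_s}=\{z_{p(i)}=\bar z_i\}$. On this group $|z_{p(i)}|=|z_i|$, so every character of the form $\prod|z_i|^{c_i}$ whose exponent vector is anti-invariant under $i\mapsto p(i)$ restricts trivially. Both $\delta_P$ and $\delta_{P_{x_s}}$ are sums of root absolute values over root sets that are permuted compatibly by $p$, and I expect the standard identity $\delta_{P_x}=\delta_P^{1/2}|_{P_x}$ for this symmetric pair to reduce, via \eqref{eq:torus}, to exactly this pairing cancellation. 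Then $\chi_{x_s}=1$ and the summand becomes $\dim\Hom_{M_{x_s}}(\sigma,\C)$.

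\textbf{Step 3 (the analytic hypothesis, the main obstacle).} Theorem 5.8 of \cite{ST} in the Archimedean setting also controls the contributions of transversal derivatives along non-open orbits. Its hypothesis is a non-coincidence condition: for each $s$, the characters $\sigma\otimes\chi_{x_s}\otimes\mathrm{Sym}^k(\text{conormal})$ for $k>0$ must admit no $M_{x_s}$-invariant forms, or the theorem must already absorb these terms into the stated sum. I expect this to be the hardest point. I would try first to exploit the ordering $r(\delta_1)\ge\cdots\ge r(\delta_{2m})$. The conormal weights at $x_s$ are sums of positive roots restricted to $\mathfrak s^{d\theta_{x_s}}$, so they pair strictly positively with the $\mathfrak s$-direction. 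This would make the real parts of exponents of the $k>0$ terms move strictly away from those of $\sigma|_{M_{x_s}}$, using $\Re\lambda_i$ and the $p$-pairing. If that sign argument fails, I would fall back on whatever formulation of Theorem 5.8 does not require genericity, since the inequality \eqref{eq:estimate} is only an upper bound. Once the hypotheses and the triviality of the twists are established, summing over $s\in J(\alpha)$ gives \eqref{eq:estimate}.
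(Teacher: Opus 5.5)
You are taking the paper's route, specializing \cite[Theorem~5.8]{ST}, but the proposal never verifies that theorem's hypotheses, and the step you call the main obstacle is left as a hope. In \cite{ST}, excluding contributions from transversal derivatives of positive order is not a separate non-coincidence condition to be checked orbit by orbit. It is part of the conclusion of Theorem~5.8, valid once the standing assumptions of \cite[Section~5.2]{ST} and conditions (A)--(D) of \cite[Section~5.3]{ST} hold. Those are what must be checked, and your plan does not identify them:
\begin{itemize}
\item the standing assumptions: a Cartan involution $\theta_c(g)=(\bar g^{\,t})^{-1}$ commuting with $\theta$, with $\mathfrak s$ a $\theta$-stable maximal abelian subspace of $\Lie(G)^{-\theta_c}$, and $P$ standard;
\item condition (A): the pair $(\mathfrak{sl}_{2m}(\C)_\R,\mathfrak{su}^*(2m))\simeq(\mathfrak q_\C,\mathfrak q)$ is of type (A)(1). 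This is a genuine restriction on the symmetric pair and appears nowhere in your plan;
\item condition (B): $P=M_cAU$ is cuspidal, and $\sigma=\tau\boxtimes e^{\Lambda}$ with $\tau=\sigma_0|_{M_c}$ square integrable and $\Re\Lambda(X_{\beta_{ij}})=2c_m(r(\delta_i)-r(\delta_j))\ge0$.
\end{itemize}
Conditions (C) and (D) are then immediate. You also need to pass from $P$-orbits on $G\cdot e$ to $H\backslash G/P$ via $g_s=\eta_s^{-1}$, with $\Ad(g_s^{-1})\circ\theta\circ\Ad(g_s)=\theta_{x_s}$. Then \eqref{eq:torus} supplies the required stability of $M$ and $\mathfrak s$.

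You are right that the ordering $r(\delta_1)\ge\cdots\ge r(\delta_{2m})$ is the decisive input. But it enters only as the closed-chamber inequality in (B) and gives no strictness. Your hand-made sign argument would need much more than you state. To kill positive-order terms on every orbit, not just those where $\sigma|_{M_{x_s}}$ is trivial, you must show something specific. The real exponent of $\sigma|_{M_{x_s}}$, with whatever modulus shifts accompany those terms, can never equal minus a nonzero sum of conormal weights. That means fixing sign conventions for the conormal weights and showing they lie in the cone opposite to that exponent, each restricting nontrivially to the $\theta_{x_s}$-fixed part of $\mathfrak s$. ``Moving strictly away from the exponents of $\sigma$'' does not address this. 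The fallback to ``whatever formulation does not require genericity'' is not an argument.

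Your Step~2 is fine in spirit. The needed identity $\delta_{P_{x_s}}\delta_P^{-1/2}|_{M_{x_s}}=1$ is \cite[Lemma~4.2]{MOYglobal}, and your pairing cancellation is the right mechanism for it.
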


\begin{proof}
We check the standing assumptions of Section 5.2 in \cite{ST}.
The symbols $L,N,\gamma$ in ST denote our $M,U,\sigma$,
respectively.
Since $G$ is connected, it is of inner type.
The subgroup $H\simeq G^\theta$ is symmetric.
The character $\sigma$ has finite length.

Define the Cartan involution $\theta_c(g)=(\bar g^{\,t})^{-1}$.
A direct computation gives $\theta_c\theta=\theta\theta_c$.
The algebra $\mathfrak s$ is a maximal abelian subspace of
$\Lie(G)^{-\theta_c}$.
The involution $\theta$ preserves $\mathfrak s$.
The positive roots determined by the upper triangular matrices
make $P$ standard.

We now check conditions (A) through (D) of
\cite[Section~5.3]{ST}.
For (A), put $\mathfrak q=\mathfrak{su}^*(2m)$.
The simple component of the symmetric Lie algebra pair is
\[
 \bigl(\mathfrak{sl}_{2m}(\C)_\R,\mathfrak{su}^*(2m)\bigr)
 \simeq(\mathfrak q_\C,\mathfrak q).
\]
Here $\mathfrak q_\C$ is regarded as a real Lie algebra.
This pair has type (A)(1).
For $m=1$, the same description holds with
$\mathfrak q=\mathfrak{su}(2)$.

For (B), let $M_c$ be the diagonal subgroup with entries in $S^1$.
Let $A$ be the diagonal subgroup with entries in $\R_{>0}$.
Then $M=M_cA$ and $P=M_cAU$.
Since $M_c$ is compact, $P$ is cuspidal.
Using the parameters in \eqref{eq:characters}, set
$\sigma_0=\bigotimes_i\omega_{k_i}$.

Use the norm coordinates $\mathfrak a_M=\R^{2m}$ from \cite{MOYglobal}.
Let $\mathfrak a_{M,\C}^*$ be its complexified dual.
In these coordinates, the map $H_M:M\to\mathfrak a_M$ is
\[
 H_M(\diag(z_i))=(\log|z_i|_\C)_i.
\]
Let $\lambda=(\lambda_i)\in\mathfrak a_{M,\C}^*$.
Define $\sigma_0[\lambda](t)=e^{\langle\lambda,H_M(t)\rangle}\sigma_0(t)$
for $t\in M$.
The character identities above give $\sigma=\sigma_0[\lambda]$.

Put $\tau=\sigma_0|_{M_c}$ and $\Lambda=\lambda\circ dH_M$.
Since $\log|e^{t_i}|_\C=2t_i$, we have
\begin{equation}\label{eq:parameter}
 \Lambda(\diag(t_i))=2\sum_i\lambda_i t_i.
\end{equation}
Thus $\sigma=\tau\boxtimes e^\Lambda$ on $M_cA$.
The character $\tau$ is square integrable because $M_c$ is compact.
For $i<j$, let $\beta_{ij}(\diag(t_i))=t_i-t_j$.
Let $E_{ii}$ be the $i$th diagonal matrix unit.
The dual of $\beta_{ij}$ with respect to the Killing form on the
semisimple part is $X_{\beta_{ij}}=c_m(E_{ii}-E_{jj})$ for a
constant $c_m>0$.
The ordering of the $r(\delta_i)$ gives
\begin{equation}\label{eq:chamber}
 \Re\Lambda(X_{\beta_{ij}})
 =2c_m\bigl(r(\delta_i)-r(\delta_j)\bigr)\ge0.
\end{equation}
This verifies the chamber condition in (B).
Condition (C) is vacuous because $\Lie(M)$ is abelian.
Condition (D) holds because the target character is trivial.

Choose $\eta_s\in G$ such that $x_s=\eta_s\cdot e$.
Inversion identifies $P\backslash G/H$ with $H\backslash G/P$.
Thus $g_s=\eta_s^{-1}$ gives the representatives required in ST.
Their associated involutions satisfy
\[
 \Ad(g_s^{-1})\circ\theta\circ\Ad(g_s)
 =\Ad\bigl(\eta_s\theta(\eta_s)^{-1}\bigr)\circ\theta
 =\theta_{x_s}.
\]
Formula~\eqref{eq:torus} verifies the required preservation of
$M$ and $\mathfrak s$.
The fixed subgroup of $M$ is $M_{x_s}$.
The vanishing statement in \cite[Theorem~5.8]{ST} excludes
contributions from derivatives of positive order in directions
normal to the orbits.
The multiplicity estimate in that theorem therefore gives
\eqref{eq:estimate}.
\end{proof}

\begin{proof}[Proof of Theorem~\ref{thm:main}]
Assume that $\Hom_H(S,\C)\ne0$.
By Lemma~\ref{lem:ST}, there exists $s\in J(\alpha)$ such that
$\Hom_{M_{x_s}}(\sigma,\C)\ne0$.
Let $p\in S_{2m}$ be the permutation corresponding to $s$. By \eqref{eq:no-fixed-points}, $p$ has no fixed points. Since $\sigma$ is a character, the nonzero invariant form implies
$\sigma|_{M_{x_s}}=1$.
By \eqref{eq:torus}, the coordinates $z_i$ with $i<p(i)$ vary
independently on $M_{x_s}$.
Therefore
\[
 \sigma|_{M_{x_s}}=1
 \quad\Longleftrightarrow\quad
 \delta_i(z)\delta_{p(i)}(\bar z)=1
 \quad(i<p(i),\ z\in\C^\times).
\]
By the definition of $\delta_i^*$, this condition is equivalent
to $\delta_{p(i)}=\delta_i^*$ for every $i$.
This proves necessity.

The same character calculation applies to every $s\in J(\alpha)$.
The corresponding summand in \eqref{eq:estimate} has dimension $1$ if the pairing condition holds, and $0$ otherwise.
Summing over $J(\alpha)$ gives \eqref{eq:bound} by Lemma~\ref{lem:ST}.

Conversely, let $p\in S_{2m}$ be an involution without fixed points
such that $\delta_{p(i)}=\delta_i^*$ for every $i$.
Let $s\in J(\alpha)$ be its permutation matrix.
Use the character parameters defined in \eqref{eq:characters}.
Set $\sigma_0=\bigotimes_i\omega_{k_i}$ and $\lambda=(\lambda_i)$.
For $k\in\mathbb Z$ and $\mu\in\C$, the identities
$\omega_k(\bar z)=\omega_k(z)^{-1}$ and $\nu(\bar z)=\nu(z)$
give $\omega_k[\mu]^*=\omega_k[-\mu]$.
Uniqueness of the character parameters therefore gives
$k_{p(i)}=k_i$ and $\lambda_{p(i)}=-\lambda_i$.
The equality $k_{p(i)}=k_i$ implies $\sigma_0|_{M_{x_s}}=1$.
By \eqref{eq:torus}, the action of $\theta_{x_s}$ on
$\mathfrak a_{M,\C}^*$ permutes the coordinates by $p$.
The subspace on which this action is multiplication by $-1$ is
\[
 (\mathfrak a_{M,\C}^*)_{x_s}^-
 =\{\mu=(\mu_i):\mu_{p(i)}=-\mu_i\}.
\]
Hence $\lambda\in(\mathfrak a_{M,\C}^*)_{x_s}^-$.

By \cite[Lemma~4.2]{MOYglobal},
\[
 \left.\delta_{P_{x_s}}\delta_P^{-1/2}\right|_{M_{x_s}}=1.
\]
Since $\sigma_0$ is unitary, it satisfies
\cite[Assumption~2.2]{MOYlocal}.
Theorem~5.4 of \cite{MOYlocal} therefore gives a nonzero
continuous form $\ell$ on $\IP(\sigma_0[\lambda])=S$ that is
invariant under $G_{x_s}$.

Choose $\eta_s\in G$ such that $x_s=\eta_s\cdot e$.
Let $\rho$ denote the action of $G$ on $S$.
Define $\ell_H(v)=\ell(\rho(\eta_s)v)$.
Since $G_{x_s}=\eta_sH\eta_s^{-1}$, for $h\in H$ we have
\[
 \ell_H(\rho(h)v)
 =\ell\bigl(\rho(\eta_sh\eta_s^{-1})\rho(\eta_s)v\bigr)
 =\ell_H(v).
\]
The form $\ell_H$ is continuous as a composition of continuous maps, and it is nonzero as $\rho(\eta_s)$ is invertible. Therefore, we get that $S$ is distinguished by $H$, as desired.
\end{proof}

\section{Acknowledgements}
We would like to thank our advisor, Omer Offen, for proposing this problem, and for helpful discussions.

\end{document}